\documentclass[a4paper,12pt]{amsart}
\usepackage{amsmath}
\usepackage{amsfonts}
\usepackage{amssymb,multicol}
\usepackage{multicol} 
 
\newtheorem{theorem}{Theorem}
\theoremstyle{plain}

\newtheorem{lemma}{Lemma}

\newtheorem{remark}{Remark}

\numberwithin{equation}{section}
\numberwithin{lemma}{section}
\numberwithin{theorem}{section}
\numberwithin{corollary}{section}
\numberwithin{proposition}{section}

\usepackage{hyperref}  
\hypersetup{colorlinks=true,
	linkcolor= blue,
	filecolor=magenta,
	citecolor= blue,
	urlcolor=cyan,}
\title{  on the lower Lie nilpotency index  of a group algebra  }
\author{Meena Sahai }
\address {Department of Mathematics and Astronomy\\
 University of Lucknow, Lucknow, U.P. 226007, India}
	\email{meena\_sahai@hotmail.com}
	\author{Bhagwat Sharan}
	\curraddr{Department of Mathematics and Astronomy, University of Lucknow,
		Lucknow, U.P. 226007, India}
	\email{bhagwat\_sharan@hotmail.com}
	\subjclass[2010]{Primary 16S34 ; Secondary 17B30.}
	\keywords{ Group algebras, Lie nilpotency indices, Lie dimension subgroups.}	
\begin{document}
\maketitle
\begin{abstract}
In this article, we show that if $KG$ is Lie nilpotent group algebra of a group $G$ over a field $K$ of characteristic $p>0$, then  $t_{L}(KG)=k$ if and only if $t^{L}(KG)=k$, for $k\in\{5p-3, 6p-4\}$, where $t_{L}(KG)$ and $t^{L}(KG)$ are the lower and the upper Lie nilpotency indices of $KG$, respectively.
\end{abstract}
\section{Introduction}
Let $R$ an associative ring  with identity. $R$ can be treated as a Lie ring  under the Lie product $[ x, y] = xy-yx$ for any $x, y\in R$. Set $R^{[1]} = R^{(1)}=R$ and $[x_{1},\cdots, x_{n}] = [[x_{1},\cdots,x_{n-1}]$, $x_{n}]$, where $x_{i}\in R$. For $n \geq 2$, let $R^{[n]} $ be the associative ideal of $R$ generated by  all the Lie commutators $[x_{1},\cdots, x_{n}]$, where $x_{i}\in R$ and let $R^{(n)}$ be the associative ideal of $R$ generated by all the Lie commutators $[x,y]$, where $x \in R^{(n-1)}$, $y\in R$. The ring $R$ is  Lie nilpotent (respectively, strongly Lie nilpotent) if  $R^{[n]} = 0$ $(R^{(n)}=0)$ for some positive integer $n$. The lower (upper) Lie nilpotency index of $R$ denoted by $t_{L}{(R)}$   $(t^{L}{(R)})$  is the minimal positive integer $n$ such that $R^{[n]} = 0$ $(R^{(n)}=0)$, respectively. Clearly, $t_{L}{(R)}\leq t^{L}(R)$. It is important to note that there is an example of an algebra $R$ of characteristic two which is not strongly Lie nilpotent but for which $t_{L}{(R)}=3$, see \cite{GL}.

 Let $R=KG$ be the group algebra of an arbitrary group $G$ over a field $K$. The augmentation ideal $\Delta_{K}(G)$ of $KG$ is nilpotent if and only if $K$ has characteristic $p>0$ and $G$ is a finite $p$-group. Let $t(G)$ denote  the nilpotency index of $\Delta_{K}(G)$. A non-commutative modular group algebra  $KG$ is Lie nilpotent if and only if $K$ has characteristic $p > 0$, $G$ is nilpotent and its commutator subgroup $G{^{\prime }}$ is a finite $p$-group, see \cite{PPS}. In \cite[pp. 46, 48]{Pa} explicit expressions are given for the Lie dimension subgroups $D_{(m),K}(G)$,$m\geq1
$, of $G$ over $K$, if  characteristic of $K=p> 0$,
\begin{equation*}
	D_{(m),K}(G) = \underset{\left( i-1\right) p^{j}\geq m-1}{\Pi }\gamma_{i}\left( G\right)
	^{p^{j}}.
\end{equation*}
According to \cite{Sh1}, if $KG$ is Lie nilpotent such that $|G'|=p^{n}$, then  the upper Lie nilpotency index $t^{L}(KG)$ $=$ $2+$ $(p-1)\sum\limits_{m\geq1}md_{(m+1)}$, where $p^{d_{(m)}} = | D_{(m),K}(G) : D_{(m+1),K}(G)|$ for every $m\geq 2$. Clearly, $\sum\limits_{m\geq2} d_{(m)}=n$. If $p>3$, then $t_{L}(KG$ $=$ $t^{L}(KG)$, see \cite[Theorem~1]{BP}. 

If $G$ is a nonabelian nilpotent group with $|G'|=p^{n}$, then by \cite{SB,SS}, $p+1\leq t_{L}(KG)\leq t^{L}(KG)\leq |G'|+1$. In \cite{SB}, it is proved that $t_{L}(KG)=p+1$ if and only if $t^{L}(KG)=p+1$. Lie nilpotent group algebras with Lie nilpotency index $2p$, $3p-1$ or $4p-2$ have been investigated in \cite{Sa}. Again in all these cases $t_{L}(KG)=t^{L}(KG)$. A complete description of Lie nilpotent modular group algebras $KG$ with Lie nilpotency index at most 8 is given in \cite{CS,msbs5}. More results on Lie nilpotency indices of modular group algebras are given in \cite{ Bo,BJS,BSp,BS,msbs1,msbs2,msbs3,Sh3}.

In this article, we show that if $KG$ is Lie nilpotent, then  $t_{L}(KG)=k$ if and only if $t^{L}(KG)=k$, for $k\in\{5p-3, 6p-4\}$.

We use the standard natations: $(C_{n})^{k}$ is the $k$-times direct product of the cyclic group $C_{n}$ of order $n$, $\zeta(G)$ is the center of a group $G$. All the group commutators are left normed  and $(x, y)= x^{-1}y^{-1}xy$ for all $x,y\in G$. The $i$th term of the lower central series of $G$ is $\gamma_{i}(G)$ and $G'=\gamma_{2}(G)$ is the commutator subgroup of $G$. 

We shall frequently use the following results from \cite{BKu,GL} without mentioning:
\begin{enumerate} 
	\item $R^{[m]}R^{[n]}\subseteq R^{[m + n - 2]}$. 
	\item $\gamma_{m}(U(R))\subseteq  1 + R^{[m]}$.
	\item $((x, y)-1)^{k}R^{[m]}\in  R^{[m+k]}$, for all $x,y\in U(R)$ and $k>1$.
\end{enumerate}	

\begin{lemma}\cite{msbs5}{\label{L2_6}}
Let $R$ be an associative ring with unity and let $k\geq1$. Then
\begin{enumerate} 
	\item $(R^{[3]})^{2k}\subseteq R^{[3k + 2]}$.
	\item $(R^{[3]})^{2k+1}\subseteq R^{[3k + 3]}$.
    \item $(R^{[3]})^{k}\subseteq R^{[2k + 1]}$, if 3 is a unit in $R$.
\end{enumerate}
 \end{lemma}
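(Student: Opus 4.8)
The plan is to isolate the single piece of genuine content and then obtain the three inclusions by formal induction on the exponent, using only repeated application of the relation $R^{[m]}R^{[n]}\subseteq R^{[m+n-2]}$ (item (1) of the list preceding the lemma). First I record the elementary consequence of (1): by induction on $k$ one has $(R^{[n]})^{k}\subseteq R^{[k(n-2)+2]}$ for every $n\ge 2$ and $k\ge 1$, since $(R^{[n]})^{k+1}=(R^{[n]})^{k}R^{[n]}\subseteq R^{[k(n-2)+2]}R^{[n]}\subseteq R^{[(k+1)(n-2)+2]}$. Granting the base relation $R^{[3]}R^{[3]}\subseteq R^{[5]}$, which is exactly part (i) in the case $k=1$, parts (i) and (ii) follow at once: for (i), $(R^{[3]})^{2k}=\bigl((R^{[3]})^{2}\bigr)^{k}\subseteq (R^{[5]})^{k}\subseteq R^{[3k+2]}$; and for (ii), $(R^{[3]})^{2k+1}=(R^{[3]})^{2k}R^{[3]}\subseteq R^{[3k+2]}R^{[3]}\subseteq R^{[3k+3]}$.

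Thus everything in parts (i) and (ii) reduces to the base relation $R^{[3]}R^{[3]}\subseteq R^{[5]}$, which I regard as the main obstacle. First I would reduce it to a statement about generators. Since $R^{[5]}$ is a two-sided ideal and $R^{[3]}$ is generated as an ideal by the commutators $[x,y,z]$, and since $[x,y,z]\,s=s\,[x,y,z]+[x,y,z,s]$ with $[x,y,z,s]\in R^{[4]}$ and $R^{[4]}R^{[3]}\subseteq R^{[5]}$, the inclusion $R^{[3]}R^{[3]}\subseteq R^{[5]}$ is equivalent to $[x,y,z]\,[u,v,w]\in R^{[5]}$ for all $x,y,z,u,v,w\in R$. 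The structural input here is that $\operatorname{ad}_{c}=[c,-]$ is a derivation, which gives $[[x,y,z],[u,v]]=[x,y,z,u,v]-[x,y,z,v,u]\in R^{[5]}$ and, applied once more, $[[x,y,z],[u,v,w]]\in R^{[5]}$; hence for generators $c=[x,y,z]$, $d=[u,v,w]$ one has $c\,d\equiv d\,c \pmod{R^{[5]}}$. The difficulty, and the heart of the lemma, is that this commutativity modulo $R^{[5]}$ does not by itself force the product into $R^{[5]}$: the naive rearrangements that move one factor past another are circular, in that they return the original product. Closing the gap requires expanding both outer brackets simultaneously and exploiting the multilinear structure of the degree-five component, which is where the genuine work lies; this is the step I expect to be hardest, and it is also the reason the relation does not extend freely to $R^{[3]}R^{[5]}\subseteq R^{[7]}$.

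For part (iii) the bounds of (i) and (ii) do not suffice, since $(R^{[3]})^{k}\subseteq R^{[2k+1]}$ is strictly stronger at every step beyond $k=2$; it must therefore rest on the sharper product estimate $R^{[2k-1]}R^{[3]}\subseteq R^{[2k+1]}$, valid when $3$ is invertible. Granting this, (iii) is immediate by induction: the base case $k=1$ is trivial, and the inductive step $(R^{[3]})^{k+1}=(R^{[3]})^{k}R^{[3]}\subseteq R^{[2k+1]}R^{[3]}\subseteq R^{[2k+3]}$ uses the base relation when $k=1$ and the sharpened estimate when $k\ge 2$. The role of the hypothesis that $3$ is a unit is precisely to prove this sharpened estimate: running the same derivation-identity reduction as above for $R^{[2k-1]}R^{[3]}$, one expects a residual term that is an integer multiple of $3$ times an element one wishes to absorb into $R^{[2k+1]}$, and invertibility of $3$ is what permits that absorption. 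I would therefore organise the write-up as follows: (a) prove the base relation $R^{[3]}R^{[3]}\subseteq R^{[5]}$; (b) deduce (i) and (ii) by the two displayed inductions; (c) prove the sharpened estimate under the hypothesis that $3$ is a unit and deduce (iii) by the induction just given. The main obstacle throughout is step (a), together with its $3$-invertible refinement in (c).
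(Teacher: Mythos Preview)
The paper does not prove this lemma at all: it is quoted verbatim from \cite{msbs5} and used as a black box, so there is no ``paper's own proof'' to compare against. What I can do is assess whether your outline is a viable route to an independent proof.

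Your inductive scaffolding for (i) and (ii) is sound: once $(R^{[3]})^{2}\subseteq R^{[5]}$ is established, the power formula $(R^{[n]})^{k}\subseteq R^{[k(n-2)+2]}$ together with one application of $R^{[m]}R^{[n]}\subseteq R^{[m+n-2]}$ gives both conclusions exactly as you wrote. You are also right that the base case $(R^{[3]})^{2}\subseteq R^{[5]}$ is the entire content here, and you have correctly reduced it to showing $[x,y,z][u,v,w]\in R^{[5]}$ on generators. But you stop short of actually proving this identity; the remark that ``closing the gap requires expanding both outer brackets simultaneously'' is accurate but is a description of the difficulty rather than a resolution of it. That step is genuinely nontrivial (it is essentially the Gupta--Levin / Sharma--Srivastava computation) and without it the proposal for (i)--(ii) is an outline, not a proof.

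For (iii) I am more sceptical. Your plan hinges on the estimate $R^{[2k-1]}R^{[3]}\subseteq R^{[2k+1]}$ for all $k\ge 2$ when $3$ is invertible, but you give no argument for it beyond the heuristic that ``one expects a residual term that is an integer multiple of $3$''. Note that the $k=3$ instance is $R^{[5]}R^{[3]}\subseteq R^{[7]}$, which you yourself flagged a paragraph earlier as something that ``does not extend freely''; you are now asserting that invertibility of $3$ repairs it, but nothing in your reduction to generators explains why a factor of $3$ should appear there rather than in the $k=2$ case, which already holds without any hypothesis on $3$. The standard proofs of (iii) that I am aware of proceed differently, working directly with products of $k$ triple commutators and using identities in which the coefficient $3$ arises from symmetrising over the three arguments of $[x,y,z]$, rather than bootstrapping from a general product estimate $R^{[2k-1]}R^{[3]}\subseteq R^{[2k+1]}$. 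As written, step (c) of your plan is a hope rather than a strategy.
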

\begin{theorem}\cite{msbs5} \label{thm_6}
	Let $K$ be a field of characteristics $p>0$ and let $G$ be a nilpotent group such that $G'$ is a finite abelian $p$-group with the invariants $(p^{m_{1}}, p^{m_{2}}, \cdots, p^{m_{s}})$ and $\big\vert\gamma_{3}(G)G'^{p}/G'^{p}\big\vert = p^{r}$. Then the following statements hold:
\begin{enumerate} 
\item $t_{L}(KG)\geq t(G') + r+1$, if $p=3$; 
	\item $t_{L}(KG)\geq t(G') + r(p-1) +1$ , if $p\neq 3$.
\end{enumerate}
\textrm{where} $t(G')= 1+\sum\limits_{i=1}^{s}(p^{m_{i}} - 1)$.
\end{theorem}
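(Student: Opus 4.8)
\section*{Proof proposal}

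The plan is to prove the lower bound by exhibiting an explicit nonzero element of $R^{[N-1]}$, where $R=KG$ and $N$ is the claimed bound ($N=t(G')+r(p-1)+1$ if $p\neq3$, and $N=t(G')+r+1$ if $p=3$); since $t_{L}(KG)$ is the least $n$ with $R^{[n]}=0$, producing $0\neq u\in R^{[N-1]}$ forces $t_{L}(KG)\geq N$. The element $u$ will be the generator of the top power (socle) of the augmentation ideal $\Delta_{K}(G')$, written as a product of the augmentation factors $a_{i}-1$ coming from an invariant basis $a_{1},\dots,a_{s}$ of $G'$, and the whole point will be to split these factors according to whether $a_{i}\in\gamma_{3}(G)$ or not.

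First I would fix generators of $G'$ adapted to the chain $G'^{p}\leq\gamma_{3}(G)G'^{p}\leq G'$. Using $\big|\gamma_{3}(G)G'^{p}/G'^{p}\big|=p^{r}$, choose an invariant basis $a_{1},\dots,a_{s}$ (so $G'=\prod_{i}\langle a_{i}\rangle$ with $|a_{i}|=p^{m_{i}}$) in such a way that $a_{1},\dots,a_{r}\in\gamma_{3}(G)$ while $a_{r+1},\dots,a_{s}$ are single group commutators of weight $2$. Set $u=\prod_{i=r+1}^{s}(a_{i}-1)^{p^{m_{i}}-1}\cdot\prod_{i=1}^{r}(a_{i}-1)^{p^{m_{i}}-1}$. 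Since $a_{1},\dots,a_{s}$ is an invariant basis, $u$ is, up to a nonzero scalar, the generator of the one-dimensional space $\Delta_{K}(G')^{t(G')-1}$, which is nonzero because $\Delta_{K}(G')$ has nilpotency index exactly $t(G')=1+\sum_{i}(p^{m_{i}}-1)$; as $KG'\hookrightarrow KG$, also $u\neq0$ in $R$. Note the total number of factors is $\sum_{i}(p^{m_{i}}-1)=t(G')-1$.

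Next comes the degree bookkeeping. For $i\leq r$ we have $a_{i}\in\gamma_{3}(G)\subseteq\gamma_{3}(U(R))$, so $a_{i}-1\in R^{[3]}$ by the property $\gamma_{m}(U(R))\subseteq1+R^{[m]}$; hence the right-hand block lies in $(R^{[3]})^{K}$ with $K=\sum_{i=1}^{r}(p^{m_{i}}-1)$. Here the dichotomy enters: for $p\neq3$ (so $3$ is a unit) Lemma~\ref{L2_6}(iii) gives $(R^{[3]})^{K}\subseteq R^{[2K+1]}$, while for $p=3$ only Lemma~\ref{L2_6}(i),(ii) are available, placing the block in $R^{[3K/2+2]}$ or $R^{[(3K+3)/2]}$ according to the parity of $K$. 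Then I would fold in the left-hand weight-$2$ block one generator-power at a time using $((x,y)-1)^{k}R^{[m]}\subseteq R^{[m+k]}$ (valid since each exponent $p^{m_{i}}-1>1$ when $p$ is odd), which adds $L=\sum_{i=r+1}^{s}(p^{m_{i}}-1)=t(G')-1-K$ to the index without loss. For $p\neq3$ this yields $u\in R^{[2K+1+L]}=R^{[t(G')+K]}$, and since $p^{m_{i}}-1\geq p-1$ gives $K\geq r(p-1)$, we obtain $u\in R^{[t(G')+r(p-1)]}$, so statement~(ii) follows; the $p=3$ computation is identical in spirit, the weaker Lemma~\ref{L2_6}(i),(ii) producing the smaller gain $r$ in place of $r(p-1)$, which gives statement~(i).

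The main obstacle is the group-theoretic selection in the second step: one must realize the invariant decomposition of $G'$ by a basis whose first $r$ members genuinely lie in $\gamma_{3}(G)$ (not merely in $\gamma_{3}(G)G'^{p}$) and whose remaining members are single commutators, so that both $a_{i}-1\in R^{[3]}$ and the commutator-power property are applicable; controlling this simultaneously with the orders $p^{m_{i}}$ is delicate, and this is where the Lie dimension subgroups $D_{(m),K}(G)=\prod_{(i-1)p^{j}\geq m-1}\gamma_{i}(G)^{p^{j}}$ should be invoked to keep track of which products of $(g-1)$'s descend into which $R^{[m]}$. A secondary difficulty is the case $p=2$, where weight-$2$ generators of order $p$ contribute exponents $p^{m_{i}}-1=1$, for which the power property yields no gain; these degenerate factors must be handled separately, for instance by combining them through genuine Lie commutators with elements of $G$ outside $G'$, so that the count $t(G')+r(p-1)$ is still reached. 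Once the selection and these boundary factors are controlled, the inclusions above are routine consequences of Lemma~\ref{L2_6} and the recalled commutator-power properties.
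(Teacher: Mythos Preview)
The paper does not prove this theorem; it is quoted from \cite{msbs5} without argument, so there is no in-paper proof to compare against. Your overall plan---produce the nonzero ``socle'' element $u=\prod_i(a_i-1)^{p^{m_i}-1}$ of $\Delta_K(G')$, split the factors according to whether $a_i\in\gamma_3(G)$, and then track the Lie filtration degree using Lemma~\ref{L2_6} together with the property $((x,y)-1)^kR^{[m]}\subseteq R^{[m+k]}$---is exactly the mechanism the paper itself deploys in the neighbouring Lemma~\ref{L5_6}, so the strategy is appropriate and the arithmetic you outline is correct.

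The genuine gap is the one you flag yourself, and it deserves to be stated more sharply. You need an invariant basis $a_1,\dots,a_s$ of the abelian $p$-group $G'$ satisfying \emph{two} simultaneous conditions: (a) $a_1,\dots,a_r\in\gamma_3(G)$, and (b) each $a_{r+1},\dots,a_s$ is a \emph{single} group commutator $(x,y)$. Condition~(a) asks that $\gamma_3(G)$ contain $r$ members of some invariant basis, which follows from $\dim_{\mathbb F_p}\gamma_3(G)G'^{p}/G'^{p}=r$ by a standard basis-adjustment argument for finite abelian $p$-groups, but is not automatic and should be written out. Condition~(b) is more serious: an arbitrary element of $G'$ is only a \emph{product} of commutators, and the rule $((x,y)-1)^kR^{[m]}\subseteq R^{[m+k]}$ is stated for a single commutator $(x,y)$ of units. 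You will either need to argue that the basis can be chosen with each $a_i$ a genuine commutator, or replace that rule by a version valid for products of commutators; neither step is in your write-up. The $p=2$ boundary issue you mention (exponent $1$ factors) is real but secondary to this.
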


\section{Main Results}
\begin{lemma}\cite[pp 33]{BKu}\label{L4_6}
Let $R$ be an associative ring with unity and $m\geq1$. Then for all $x,y\in U(R)$, $((x,y,y)-1)R^{[m]}\in R^{[m+2]}$.	
\end{lemma}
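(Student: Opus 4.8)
The plan is to convert the iterated \emph{group} commutator $(x,y,y)$ into iterated \emph{Lie} commutators, split off a square that is controlled by the quoted estimate (3), and isolate a single residual Lie inclusion. I start from the collection identity $(x,y^{2})=(x,y)^{2}(x,y,y)$, which rearranges to $(x,y,y)=(x,y)^{-2}(x,y^{2})$ and hence to
\begin{equation*}
(x,y,y)-1=(x,y)^{-2}\bigl[(x,y^{2})-(x,y)^{2}\bigr].
\end{equation*}
Setting $p=(x,y^{2})-1$ and $q=(x,y)-1$ gives $(x,y^{2})-(x,y)^{2}=p-2q-q^{2}$. Using $(a,b)-1=a^{-1}b^{-1}[a,b]$ together with $[x,y^{2}]=[x,y]y+y[x,y]$, a direct computation yields $p-q=x^{-1}y^{-2}[x,y]y$ and therefore
\begin{equation*}
p-2q=x^{-1}y^{-2}\bigl[[x,y],y\bigr]=x^{-1}y^{-2}[x,y,y],
\end{equation*}
where $[x,y,y]$ now denotes the iterated Lie commutator. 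Since $(x,y)^{-2},x^{-1},y^{-2}\in U(R)$ and each $R^{[n]}$ is a two-sided ideal, and since $q^{2}R^{[m]}=((x,y)-1)^{2}R^{[m]}\subseteq R^{[m+2]}$ by (3) with $k=2$, the whole statement reduces to the single Lie-theoretic inclusion $[x,y,y]\,R^{[m]}\subseteq R^{[m+2]}$.

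For this residual inclusion the natural strategy is induction on $m$ via the inner derivation $d=\operatorname{ad}y$, $d(r)=[r,y]$, for which $[x,y,y]=d^{2}(x)$. The case $m=1$ is immediate: $[x,y,y]\in R^{[3]}$ and $R^{[3]}$ is a right ideal, so $[x,y,y]R\subseteq R^{[3]}=R^{[1+2]}$. For larger $m$ one must bound $[x,y,y]\,w$ for $w\in R^{[m]}$; testing first on a left-normed generator $\zeta=[z_{1},\dots,z_{m}]$ and using that $\zeta$ is a \emph{pure} commutator, so that $d^{2}(\zeta)=[z_{1},\dots,z_{m},y,y]\in R^{[m+2]}$, the Leibniz rule gives
\begin{equation*}
[x,y,y]\,\zeta=d^{2}(x)\,\zeta=d^{2}(x\zeta)-2\,d(x)\,d(\zeta)-x\,d^{2}(\zeta),
\end{equation*}
in which the last summand $x\,d^{2}(\zeta)$ already lies in $R^{[m+2]}$.

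I expect the main obstacle to be the cross term $d(x)\,d(\zeta)=[x,y]\,[z_{1},\dots,z_{m},y]$, a product of the weight-$2$ commutator $[x,y]$ with the pure weight-$(m+1)$ commutator $[z_{1},\dots,z_{m},y]$. The product estimate (1) alone only places it in $R^{[m+1]}$, one Lie-degree short of the target, and the same deficit affects $d^{2}(x\zeta)$ because $x\zeta$ is not a pure commutator (and, more globally, because arbitrary left multipliers must be absorbed); thus (1) and (2) by themselves do not close the induction. The repeated entry $y$ in $[x,y,y]$ is precisely what should supply the missing degree, the mechanism being exactly analogous to the way a \emph{square} gains two degrees in (3). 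Making this precise is where I would invoke the finer commutator calculus of \cite{GL,BKu}: concretely, I would seek to rewrite the cross term modulo $R^{[m+2]}$ as a sum of expressions $u\,((a,b)-1)^{2}\,v$ with $u,v\in U(R)$, so that (3) applies directly; once this is in place, $R^{[4]}R^{[m]}\subseteq R^{[m+2]}$ from (1) together with the reduction of the first paragraph completes the proof.
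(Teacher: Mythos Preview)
Your reduction in the first paragraph is correct and in fact a little cleaner than the paper's. The paper expands $((x,y,y)-1)$ directly via $(a,b)-1=a^{-1}b^{-1}[a,b]$ applied twice and obtains three summands: a unit multiple of the Lie triple $[x,y,y]$, a unit multiple of $[x,y]\,[x,y,x^{-1}y^{-1}]$, and a unit multiple of $[x,y]^{2}$ times a unit. You instead use the collection formula $(x,y^{2})=(x,y)^{2}(x,y,y)$ and arrive at only two pieces, a unit multiple of $[x,y,y]$ and the square $((x,y)-1)^{2}$. In both decompositions the square-type terms are disposed of by estimate (iii) (for the paper's $[x,y]^{2}$ one writes $[x,y]=yx((x,y)-1)$ and applies (iii) twice with $k=1$), and everything reduces to the single inclusion $[x,y,y]\,R^{[m]}\subseteq R^{[m+2]}$.

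The gap is exactly where you locate it, and it is genuine: your Leibniz induction for this residual inclusion does not close, since the cross term $[x,y][\zeta,y]$ and the term $d^{2}(x\zeta)$ are only placed in $R^{[m+1]}$ by (i), and your proposed rescue of rewriting the cross term as sums $u((a,b)-1)^{2}v$ is a hope rather than an argument. The paper does not attempt any such induction either. It simply cites Levin--Sehgal \cite{LS} for the required estimate, which supplies precisely this extra Lie degree for a pure triple commutator (and thereby also covers the paper's extra term $[x,y]\,[x,y,x^{-1}y^{-1}]\,R^{[m]}$). So the missing ingredient is not a refinement of (iii) but the Levin--Sehgal lemma itself; once you invoke it in place of your inductive attempt, your first-paragraph reduction already finishes the proof.
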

\begin{proof} We observe that
\begin{align*}
((x,y,y)-1) &= (x,y)^{-1}y^{-1}[(x,y),y] \\
   &= (x,y)^{-1}y^{-1}\Big(x^{-1}y^{-1}[x,y,y] + [x^{-1}y^{-1},y][x,y]\Big)\\
   &=(x,y)^{-1}y^{-1}\Big(x^{-1}y^{-1}[x,y,y] + x^{-1}[x,y][x,y, x^{-1}y^{-1}]\\
    &\qquad\qquad\qquad\qquad\qquad\qquad\qquad\qquad-x^{-1}[x,y]^{2}x^{-1}y^{-1}\Big).
\end{align*}	
By \cite{LS}, we get $((x,y,y)-1)R^{[m]}\in R^{[m+2]}$. 

\end{proof}
\begin{lemma}\label{L44_6}
Let $R$ be an associative ring with unity and let $2\in U(R)$. Then for all $x,y\in U(R)$, $((x,y,y,y)-1)^{2}\in R^{[7]}$.	
\end{lemma}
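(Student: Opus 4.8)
I would reduce to a situation where Lemma~\ref{L4_6} applies and then sharpen a crude bound by one step using $2\in U(R)$. Put $a=(x,y)$, so that $(x,y,y,y)=((x,y),y,y)=(a,y,y)$ and write $w:=(x,y,y,y)-1=(a,y,y)-1$. Since $(a,y,y)\in\gamma_{4}(U(R))$, the inclusion $\gamma_{m}(U(R))\subseteq 1+R^{[m]}$ gives $w\in R^{[4]}$, and applying Lemma~\ref{L4_6} with $x$ replaced by $a$ gives $wR^{[m]}\subseteq R^{[m+2]}$ for every $m\ge1$. In particular $w^{2}=w\cdot w\in wR^{[4]}\subseteq R^{[6]}$. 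So the entire problem is to improve $R^{[6]}$ to $R^{[7]}$, and this is exactly where the hypothesis $2\in U(R)$ must enter — note that $5p-3$ and $6p-4$ are precisely the indices one is forced to treat separately when $p\ge3$, i.e. when $2$ is invertible, while for $p=2$ they are small and already covered by the classification up to index $8$.

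To gain the extra step I would substitute $x\mapsto a$ in the explicit identity established inside the proof of Lemma~\ref{L4_6}:
$$w=(a,y)^{-1}y^{-1}\Bigl(a^{-1}y^{-1}[a,y,y]+a^{-1}[a,y]\,[a,y,a^{-1}y^{-1}]-a^{-1}[a,y]^{2}a^{-1}y^{-1}\Bigr).$$
Because $a=(x,y)$ is itself a commutator, $[a,y]=(x,y)y-y(x,y)=y(x,y)\bigl((x,y,y)-1\bigr)\in R^{[3]}$, and similarly $[a,y,a^{-1}y^{-1}]\in R^{[3]}$ and $[a,y]^{2}\in(R^{[3]})^{2}\subseteq R^{[5]}$ by Lemma~\ref{L2_6}(i). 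Hence the last two summands lie in $R^{[5]}$, and we may write $w=w_{0}+w_{1}$ with $w_{0}=(a,y)^{-1}y^{-1}a^{-1}y^{-1}[a,y,y]\in R^{[4]}$ and $w_{1}\in R^{[5]}$. Since $R^{[4]}R^{[5]}$, $R^{[5]}R^{[4]}$ and $R^{[5]}R^{[5]}$ all lie in $R^{[7]}$ (using $R^{[i]}R^{[j]}\subseteq R^{[i+j-2]}$), we get $w^{2}\equiv w_{0}^{2}\pmod{R^{[7]}}$, so it suffices to prove $w_{0}^{2}\in R^{[7]}$.

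For the last point, write $w_{0}=\alpha\,[a,y,y]$ with $\alpha\in U(R)$ and $[a,y,y]=[a,y]y-y[a,y]$, and expand $w_{0}^{2}=\alpha[a,y,y]\,\alpha[a,y,y]$ into monomials in $[a,y]$, $y$ and the unit $\alpha$. These are simplified using $[a,y]\in R^{[3]}$, $[a,y]^{2}\in R^{[5]}$, Lemma~\ref{L4_6}, and the commutation relations $[[a,y],g]\in R^{[4]}$ for $g\in U(R)$: every monomial that is not already in $R^{[7]}$ is matched with the ``mirror'' monomial obtained by interchanging the two occurrences of the outer $y$, and the two combine into twice an element of $R^{[7]}$; invertibility of $2$ then forces $w_{0}^{2}\in R^{[7]}$. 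This final bookkeeping is the main obstacle: it is the only place the argument uses $\operatorname{char}K\ne2$, and in characteristic $2$ the paired residue does not cancel — which is consistent with the statement failing to be needed there.
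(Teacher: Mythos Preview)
Your reduction is correct and coincides with the paper's: the decomposition $w=w_{0}+w_{1}$ with $w_{1}\in R^{[5]}$ is exactly the paper's $t_{3}-1=\alpha+\beta$ (your $w_{0}$ is their $\alpha$, your $w_{1}$ their $\beta$), and the cross terms $w_{0}w_{1},\,w_{1}w_{0},\,w_{1}^{2}\in R^{[7]}$ reduce the problem to $w_{0}^{2}\in R^{[7]}$ just as they do.

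The gap is the last paragraph. Your ``mirror monomial'' pairing is not carried out, and the logic around $2\in U(R)$ is inverted: if two matched terms sum to $2z$ with $z\in R^{[7]}$, their sum already lies in $R^{[7]}$ with no hypothesis on $2$; what one actually needs is an identity of the shape $2\,w_{0}^{2}\in R^{[7]}$, and nothing in your sketch produces that. Expanding $[a,y,y]=[a,y]y-y[a,y]$ and pushing factors past one another only gives elements of $R^{[6]}$, and the symmetry you allude to does not by itself lift them to $R^{[7]}$. The paper closes this gap differently: it uses the identity $[t_{1},y]=[x^{-1}y^{-1},x,y]\,y$ (with $t_{1}=(x,y)=a$), so that $[a,y,y]=[x^{-1}y^{-1},x,y,y]\,y$ is a genuine length-four \emph{ring} commutator times a unit. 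Thus $w_{0}=(\text{unit})\,[x^{-1}y^{-1},x,y,y]\,(\text{unit})$, and the inclusion $w_{0}^{2}\in R^{[7]}$ follows from \cite[Lemma~2.2]{SS}, which is precisely the statement that when $2\in U(R)$ the square of a length-four Lie commutator (flanked by units) lies in $R^{[7]}$. That cited lemma is where the hypothesis $2\in U(R)$ is genuinely consumed, and it is a nontrivial result your bookkeeping does not reproduce. If you want to avoid the citation, you must essentially reprove that Sharma--Srivastava lemma; the monomial expansion you outline does not do so.
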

\begin{proof}
Let $t_{1}= (x,y)$, $t_{2}= (x,y,y)$ and $t_{3}= (x,y,y,y)$. Then $[t_{1},y]= [x^{-1}y^{-1},x,y]y$ and $[t_{1}^{-1},y]= y^{-1}[x^{-1}y,x,y]$. We  observe that
\begin{align*}
t_{3}-1 &= t_{2}^{-1}y^{-1}[(x,y,y),y] \\
&=t_{2}^{-1}y^{-1}[t_{1}^{-1}y^{-1}[t_{1},y],y]\\
&= t_{2}^{-1}y^{-1}\Big(t_{1}^{-1}y^{-1} [[t_{1},y],y]+ [t_{1}^{-1},y]y^{-1}[t_{1},y] \Big)\\
&= t_{2}^{-1}y^{-1}\Big(t_{1}^{-1}y^{-1} [x^{-1}y^{-1},x,y,y]y + y^{-1}[x^{-1}y,x,y]y^{-1}[x^{-1}y^{-1},x,y]y \Big)\\
& =\alpha + \beta,
\end{align*}
where  $\beta= t_{2}^{-1}y^{-2}[x^{-1}y,x,y]y^{-1}[x^{-1}y^{-1},x,y]y$ and $\alpha= t_{2}^{-1}y^{-1}t_{1}^{-1}y^{-1} [x^{-1}y^{-1},x,y,y]y$. By Lemma~\ref{L2_6}, $\beta\in R^{[5]}$ and
by \cite[Lemma 2.2]{SS}, $\alpha^{2}\in R^{[7]}$. Also $\alpha\beta,\beta\alpha\in R^{[7]}$ and $\beta^{2}\in R^{[8]}$. Hence $(t_{3}-1)^{2}\in R^{[7]}$.
\end{proof}
\begin{lemma} \label{L5_6} 
Let $G$ be a nonabelian group and let $R$ be a commutative ring such that
 $RG^{[n]} = 0$, $n\geq4$. Let  $G^{\prime}$ be a finite $p$-group for some prime  $p \geq 3$ and let  $m_{i}$ be the rank of the finite abelian group $\gamma_{i}(G)/\gamma_{i+1}(G)$, $i\geq2$. Then 
	\begin{equation*}
	m_{2} + \frac{3}{2}m_{3} + 2m_{4} + 3m_{5} + \cdots + (c-2)m_{c} \leq \frac{n-3}{p-1},
	\end{equation*}	
where $c$ is the class of $G$.	
\end{lemma}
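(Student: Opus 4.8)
The plan is to produce, for the given $G$, a nonzero element $z\in RG^{[N]}$ with
\[
N\;=\;2+(p-1)\Big(m_2+\tfrac{3}{2}m_3+2m_4+3m_5+\cdots+(c-2)m_c\Big);
\]
since $RG^{[n]}=0$ this forces $N\leq n-1$, which is exactly the asserted inequality. Passing to a residue field of $R$ I may assume $R=K$ is a field of characteristic $p$. Embed $G$ in the unit group of $KG$; then $g-1\in KG^{[m]}$ whenever $g\in\gamma_m(G)$, and each $\gamma_i(G)$ with $i\geq2$ lies in the finite $p$-group $G'$. For each $i$ with $2\leq i\leq c$ I would pick $u_{i,1},\dots,u_{i,m_i}\in\gamma_i(G)$ whose images form an $\mathbb{F}_p$-basis of $\gamma_i(G)/\gamma_i(G)^p\gamma_{i+1}(G)$ (a space of dimension $m_i$), with the $u_{2,j}$ chosen to be single group commutators.

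Then I would set
\[
z\;=\;\Big(\prod_{j=1}^{m_c}(u_{c,j}-1)^{p-1}\Big)\cdots\Big(\prod_{j=1}^{m_3}(u_{3,j}-1)^{p-1}\Big)\Big(\prod_{j=1}^{m_2}(u_{2,j}-1)^{p-1}\Big),
\]
the blocks written in order of decreasing weight, and estimate the Lie power containing $z$ weight by weight. For $i\geq4$ the $m_i(p-1)$ factors $(u_{i,j}-1)$ all lie in $KG^{[i]}$, so by repeated use of $KG^{[a]}KG^{[b]}\subseteq KG^{[a+b-2]}$ their product lies in $KG^{[(i-2)m_i(p-1)+2]}$. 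For $i=3$ the $m_3(p-1)$ factors lie in $KG^{[3]}$ and $m_3(p-1)$ is even, so Lemma~\ref{L2_6}(i) puts their product in $KG^{[\frac{3}{2}m_3(p-1)+2]}$; this is where the coefficient $\tfrac{3}{2}$ comes from (for $p>3$ one could do a little better, but $\tfrac{3}{2}$ makes the bound uniform in $p\geq 3$). Accumulating these bounds over all $i\geq3$ — the surplus summand $2$ in each cancelling a $-2$ coming from $KG^{[a]}KG^{[b]}\subseteq KG^{[a+b-2]}$ — gives $\prod_{i\geq3}\prod_j(u_{i,j}-1)^{p-1}\in KG^{[\,2+(p-1)(\frac{3}{2}m_3+\sum_{i\geq4}(i-2)m_i)\,]}$. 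Finally I attach the $m_2$ weight-$2$ blocks one at a time: since $u_{2,j}$ is a commutator, the inclusion $((x,y)-1)^{k}KG^{[m]}\subseteq KG^{[m+k]}$ gives $(u_{2,j}-1)^{p-1}KG^{[m]}\subseteq KG^{[m+p-1]}$, so each raises the index by $p-1$. This yields $z\in KG^{[N]}$ with $N$ as above. (When $c=2$, where $G'$ is central, one must start from the weaker seed $(u_{2,1}-1)^{p-1}\in KG^{[p]}$, and this borderline case needs separate treatment.)

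The main work is to show $z\neq0$. After multiplication by a suitable unit of $KG$, $z$ lies in $KG'$, the modular group algebra of the finite $p$-group $G'$, and it is a monomial $\prod_t(v_t-1)^{e_t}$ in the chosen generators $u_{i,j}$ with every exponent equal to $p-1<p$. The delicate point is that the $u_{i,j}$ were selected relative to the lower central series of $G$, whereas the standard basis of $KG'$ — from the Jennings/Quillen description of $\mathrm{gr}(KG')$ as the restricted enveloping algebra of $\bigoplus_m D_m(G')/D_{m+1}(G')$, with PBW basis $\{\prod_t(v_t-1)^{e_t}:0\leq e_t<p\}$ — is organised by the dimension subgroups of $G'$. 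Since the $u_{i,j}$ have pairwise distinct weights across the layers $\gamma_i(G)/\gamma_{i+1}(G)$ and are $\mathbb{F}_p$-independent within each layer, they should extend to such a basis, making $z$ a nonzero basis element; then $z\neq0$ together with $KG^{[n]}=0$ gives $N\leq n-1$, i.e.\ the displayed inequality. The chief obstacle is precisely this nonvanishing: reconciling the lower-central-series filtration of $G$ with the dimension-subgroup filtration of $G'$, keeping all exponents below $p$, and, when $p=3$, tracking the bounds of Lemma~\ref{L2_6} carefully.
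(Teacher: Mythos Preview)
Your approach is essentially the paper's: form the product $x=\prod_{i,j}(z_{ij}-1)^{p-1}$ over independent generators $z_{ij}$ of the layers $\gamma_i(G)/\gamma_{i+1}(G)$, place it in $RG^{[k]}$ with $k=2+(p-1)\bigl(m_2+\tfrac32 m_3+2m_4+\cdots\bigr)$ via Lemma~\ref{L2_6} and the product rule $R^{[a]}R^{[b]}\subseteq R^{[a+b-2]}$, and conclude from $x\neq0$ that $k\leq n-1$. The paper simply asserts $x\neq0$ and the inclusion without your more careful bookkeeping; your flag on the $c=2$ borderline is apt, since the seed for the weight-$2$ block needs a factor in $KG^{[\geq3]}$ to act on---the lemma is only invoked in the paper when $c\geq3$.
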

\begin{proof}
	Let  $z_{ij}$, $j= 1,\cdots, m_{i}$, be the $m_{i}$ independent generators of  $\gamma_{i}(G)/\gamma_{i+1}(G)$. Then 
	\begin{equation*} 
	x=  \prod_{j=1}^{m_{2}}(z_{2j}-1)^{p-1} \prod_{j=1}^{m_{3}}(z_{3j}-1)^{p-1}\cdots\prod_{j=1}^{m_{c}}(z_{cj}-1)^{p-1}\neq 0.
	\end{equation*}	
	By Lemma~\ref{L2_6} $x\in RG^{[k]}$, where $k= 2+ m_{2}(p-1) + \frac{3}{2}m_{3}(p-1) + 2m_{4}(p-1) + 3m_{5}(p-1) + \cdots + (c-2)m_{c}(p-1)$  and $k\leq n-1$ yields the bound.
\end{proof}

\begin{lemma}\label{Lemma 2.4}
	Let $KG$ be a Lie nilpotent group algebra of a nonabelian group $G$ over a field $K$ of characteristic $p>0$ such that $t_{L}(KG) =11$. Then $p=2$.
\end{lemma}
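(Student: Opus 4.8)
The strategy is to rule out each prime $p\neq2$ in turn. For $p\geq5$ this is immediate: by \cite[Theorem~1]{BP} we have $t_{L}(KG)=t^{L}(KG)$, and since $G$ is non-abelian the expression $t^{L}(KG)=2+(p-1)\sum_{m\geq1}md_{(m+1)}$ has $\sum_{m\geq1}md_{(m+1)}\geq1$, so $11=2+(p-1)N$ for some integer $N\geq1$; this forces $(p-1)\mid9$, i.e.\ $p-1\in\{1,3,9\}$, and no prime $p\geq5$ occurs. (Alternatively, Lemma~\ref{L5_6} with $R=K$ and $n=11$ forces $1\leq r_{2}\leq 8/(p-1)$, where $r_{2}$ is the rank of $\gamma_{2}(G)/\gamma_{3}(G)$, already giving $p\leq9$.) So only $p=3$ remains to be excluded.

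Assume $p=3$ and $t_{L}(KG)=11$, and let $r_{i}$ denote the rank of $\gamma_{i}(G)/\gamma_{i+1}(G)$ and $c$ the nilpotency class of $G$. Lemma~\ref{L5_6} (with $R=K$, $n=11$) gives $r_{2}+\tfrac{3}{2}r_{3}+2r_{4}+\cdots+(c-2)r_{c}\leq4$; since $r_{2}\geq1$ and (as $G$ is nilpotent) $r_{i}=0$ forces $c<i$, this yields $c\leq3$, with the only possibilities being $c=2$ (and $r_{2}\leq4$) or $c=3$ with $(r_{2},r_{3})\in\{(1,1),(1,2),(2,1)\}$. Also $t_{L}(KG)\leq t^{L}(KG)\leq|G'|+1$ forces $|G'|=3^{n}$ with $n\geq3$. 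Next, Theorem~\ref{thm_6}(i) gives $11=t_{L}(KG)\geq t(G')+r+1$, so $t(G')+r\leq10$, where $t(G')=1+\sum_{i}(3^{m_{i}}-1)$ over the invariants $(3^{m_{i}})$ of $G'$ and $3^{r}=|\gamma_{3}(G)G'^{3}/G'^{3}|$. I would now run through the finite list of admissible data. In most cases $t(G')$ alone exceeds $10$ (this eliminates $G'$ cyclic, $G'$ of rank $2$ with $n\geq3$, $G'\cong C_{9}\times C_{3}$, and every $G'$ with $n\geq4$); in the surviving cases $G'$ is forced to be elementary abelian, and a direct evaluation of the Lie dimension subgroups $D_{(m),K}(G)=\prod_{(i-1)3^{j}\geq m-1}\gamma_{i}(G)^{3^{j}}$ gives $t^{L}(KG)\in\{8,10\}<11$, contradicting $t_{L}(KG)\leq t^{L}(KG)$. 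This disposes of $c=2$ entirely and of every $c=3$ case except the single configuration $G'\cong (C_{3})^{3}$, $\gamma_{3}(G)\cong (C_{3})^{2}$, $c=3$ (so $r_{2}=1$, $r_{3}=2$, $r=2$).

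The crux, and the step I expect to be hardest, is this last configuration. Here the same computation gives $t^{L}(KG)=2+2(1\cdot1+2\cdot2)=12$, while Lemma~\ref{L5_6} gives $t_{L}(KG)\geq11$, so $t_{L}(KG)\in\{11,12\}$ and one must prove $KG^{[11]}\neq0$. Such groups $G$ really exist (for example $G=F/\gamma_{2}(F)^{3}$ with $F$ free nilpotent of class $3$ on two generators), so there is genuine content to be supplied. The plan is to produce an explicit nonzero element of $KG^{[11]}$. The obvious candidate $(c_{0}-1)^{2}(z_{1}-1)^{2}(z_{2}-1)^{2}$, with $c_{0}$ lifting a generator of $\gamma_{2}/\gamma_{3}$ and $z_{1},z_{2}$ independent generators of $\gamma_{3}$, only lies in $KG^{[10]}$, is central in $KG$, and is annihilated by every $w-1$ with $w\in\gamma_{2}(G)$, so it cannot be pushed one step deeper; one must instead mix the lower central series generators with a generator of $G$ outside $G'$, track the filtration degree using $R^{[m]}R^{[n]}\subseteq R^{[m+n-2]}$, $((x,y)-1)^{k}R^{[m]}\subseteq R^{[m+k]}$, and Lemmas~\ref{L2_6} and~\ref{L4_6}, and verify non-vanishing by projecting onto a suitable finite-dimensional quotient of $KG$. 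Once $KG^{[11]}\neq0$ is in hand, $t_{L}(KG)=12\neq11$, which is the desired contradiction; the reductions preceding this step are routine once the inequalities of Lemma~\ref{L5_6} and Theorem~\ref{thm_6} are applied.
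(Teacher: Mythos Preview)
Your reductions are essentially the paper's: rule out $p\geq5$, then for $p=3$ use Lemma~\ref{L5_6} and Theorem~\ref{thm_6} to force $c\leq3$ and $G'$ abelian with only a handful of possible shapes, all of which either violate $t(G')+r\leq10$ or give $t^{L}(KG)\leq10$. (One small slip: it is not true that every $G'$ with $n\geq4$ has $t(G')>10$, since $(C_{3})^{4}$ gives $t(G')=9$; but your Lemma~\ref{L5_6} bound correctly excludes $(C_{3})^{4}$ with $c=3$, and for $c=2$ it gives $t^{L}(KG)=10$, so the case analysis still closes.)

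The genuine gap is the final configuration $G'\cong(C_{3})^{3}$, $\gamma_{3}(G)\cong(C_{3})^{2}$. You correctly identify this as the crux and correctly frame the goal as exhibiting a nonzero element of $KG^{[11]}$, but you then leave it as an open-ended plan and, crucially, dismiss the very element that works. The missing idea is a structural fact about $3$-groups of maximal class (Blackburn \cite{Bla58}, as used in \cite{Bo}): in this configuration one may choose $x,y,z\in G$ so that $a=(x,y)$, $b=(x,y,y)$, $c=(x,y,z)$ are independent generators of $G'$ with $\gamma_{3}(G)=\langle b,c\rangle$. The point is that $b$ is not an arbitrary element of $\gamma_{3}(G)$ but specifically a commutator of the form $(x,y,y)$, so Lemma~\ref{L4_6} applies and gives $(b-1)R^{[m]}\subseteq R^{[m+2]}$ rather than merely $R^{[m+1]}$. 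With this, the ``obvious candidate'' $(a-1)^{2}(b-1)^{2}(c-1)^{2}$ already lies in $KG^{[11]}$: indeed $(c-1)^{2}\in(KG^{[3]})^{2}\subseteq KG^{[5]}$ by Lemma~\ref{L2_6}, two applications of Lemma~\ref{L4_6} push $(b-1)^{2}(c-1)^{2}$ into $KG^{[9]}$, and then $((x,y)-1)^{2}KG^{[9]}\subseteq KG^{[11]}$. Since $a,b,c$ are independent in $(C_{3})^{3}$ the element is nonzero, contradicting $KG^{[11]}=0$. No mixing with elements outside $G'$, and no projection argument, is needed.
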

\begin{proof}
If $p\geq 5$, then $t_{L}(KG)=t^{L}(KG)$ is always even by \cite[Theorem~1]{BP}. So let $p=3$ and $t_{L}(KG) =11$. Then $|G'|\geq 3^{3}$. Suppose that $x_{i}\in G$ such that $(x_{1},x_{2},\cdots,x_{i})\in\gamma_{i}(G)\backslash  \gamma_{i+1}(G)$, $i\geq 2$. Then by Lemma~\ref{L2_6}, $((x_{1},x_{2})-1)^{2}((x_{1},x_{2},x_{3})-1)^{2}((x_{1},x_{2},x_{3},x_{4})-1)^{2}\in KG^{[11]}=0$. So $\gamma_{4}(G)=1$ and $G'$ is abelian. Let  $(3^{m_{1}}, 3^{m_{2}}, \cdots, 3^{m_{s}})$ be the invariants of $G'$ and let $\big\vert\gamma_{3}(G)G'^{3}/G'^{3}\big\vert = 3^{r}$. Then by 
Theorem~\ref{thm_6}, $r+t\leq 9 $, where $ t= t(G^{\prime})-1 = \sum\limits_{i=1}^{s}(3^{m_{i}}-1)$ is an even number. Clearly  $t \geq 6$. We have the following two cases:

{\bf{Case 1.}}	If $t=8$, then $ G'\cong C_{9}$ or $(C_{3})^{4}$ and $r\leq1$.
If $G'\cong C_{9}$, then $t_{L}(KG) =10$, so $ G'\cong (C_{3})^{4}$. Now if $r=0$, then $\gamma_{3}(G)\subseteq G'^{3}$. But then by \cite[Theorem 3.2]{BKu}, $t_{L}(KG)=10$, a contradiction. If $r=1$, then  $\gamma_{3}(G)\cong C_{3}$. Let $b_{1},b_{2},b_{3}$ be the independent generators of $G'/\gamma_{3}(G)$  and let $\gamma_{3}(G)=\langle c_{1}\rangle$. Then $(b_{1}-1)^{2}(b_{2}-1)^{2}(b_{3}-1)^{2}(c_{1}-1)^{2}\in KG^{[11]}=0$, which is a contradiction.

{\bf{Case 2.}} If $t=6$, then $G'\cong (C_{3})^{3}$ and $r\leq3$. Clearly $r\neq3$. If $r\leq 1$, then $t_{L}(KG)\leq t^{L}(KG)\leq10$. So   $r=2$ and  $\gamma_{3}(G)\cong C_{3}\times C_{3}$.  But then as in   \cite{Bo}, by using  \cite{Bla58}, there exist $x,y,z\in G$ such that $a= (x,y), b=(x,y,y), c= (x,y,z)$, $G'= \langle a,b,c\rangle$ and $\gamma_{3}(G)= \langle b,c\rangle$.
Therefore by  Lemmas~\ref{L2_6} and \ref{L4_6}, $(a-1)^{2}(b-1)^{2}(c-1)^{2}\in KG^{[11]}=0$,
which is a contradiction. 
\end{proof}
\begin{lemma}\label{Lemma 2.5}
Let $KG$ be a Lie nilpotent group algebra of a nonabelian group $G$ over a field $K$ of characteristic $p>0$ such that $t_{L}(KG) =13$. Then $p=2$.
\end{lemma}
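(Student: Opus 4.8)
The plan is to follow the template of the proof of Lemma~\ref{Lemma 2.4}, with $13$ in place of $11$ and a correspondingly longer case analysis. First, if $p\ge 5$ then $t_{L}(KG)=t^{L}(KG)$ by \cite[Theorem~1]{BP}, and by \cite{Sh1} the index $t^{L}(KG)=2+(p-1)\sum_{m\ge 1}md_{(m+1)}$ is even since $p-1$ is even; as $13$ is odd this is impossible, so $p\in\{2,3\}$ and it remains to exclude $p=3$. Assume $p=3$ and $t_{L}(KG)=13$ throughout what follows.

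The first step is to bound the class of $G$ and to eliminate the case $\gamma_{4}(G)\ne 1$. From $t_{L}(KG)\le t^{L}(KG)\le|G'|+1$ we get $|G'|\ge 3^{3}$. Lemma~\ref{L5_6}, applied with $R=K$ and $n=13$, gives
\[
m_{2}+\tfrac32 m_{3}+2m_{4}+3m_{5}+\cdots+(c-2)m_{c}\le 5,
\]
where $m_{i}=\operatorname{rank}\bigl(\gamma_{i}(G)/\gamma_{i+1}(G)\bigr)$ and $c$ is the class of $G$. Since $m_{i}\ge 1$ for $2\le i\le c$, this forces $c\le 4$, and when $c=4$ it forces $m_{2}=m_{3}=m_{4}=1$, so that $\gamma_{2}/\gamma_{3}$, $\gamma_{3}/\gamma_{4}$ and $\gamma_{4}$ are cyclic. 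In the class-$4$ case one then picks, using the structure of such groups (cf.\ \cite{Bla58}, as in Lemma~\ref{Lemma 2.4}), elements $x,y$ for which $(x,y)$, $(x,y,y)$, $(x,y,y,y)$ are nontrivial in $\gamma_{2}/\gamma_{3}$, $\gamma_{3}/\gamma_{4}$, $\gamma_{4}$ respectively; then Lemma~\ref{L44_6} gives $\bigl((x,y,y,y)-1\bigr)^{2}\in KG^{[7]}$, two applications of Lemma~\ref{L4_6} carry this into $KG^{[11]}$ after multiplying by $\bigl((x,y,y)-1\bigr)^{2}$, and item~(3) of the list preceding Lemma~\ref{L2_6} carries it into $KG^{[13]}$ after multiplying by $\bigl((x,y)-1\bigr)^{2}$, so that
\[
\bigl((x,y)-1\bigr)^{2}\bigl((x,y,y)-1\bigr)^{2}\bigl((x,y,y,y)-1\bigr)^{2}\in KG^{[13]}=0 .
\]
Since the three commutators are independent in the respective quotients, this product is nonzero in $KG$, a contradiction. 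Hence $\gamma_{4}(G)=1$ and $G'$ is abelian.

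With $G'$ abelian I apply Theorem~\ref{thm_6}(i): writing $t=t(G')-1=\sum_{i}(3^{m_{i}}-1)$ (an even integer) and $|\gamma_{3}(G)G'^{3}/G'^{3}|=3^{r}$, we get $13\ge t(G')+r+1$, i.e.\ $t+r\le 11$. Combined with $|G'|\ge 3^{3}$ (which gives $t\ge 6$ and excludes cyclic $G'$ as well as $G'\cong C_{9}$), this leaves only $G'\cong(C_{3})^{3}$ with $r\le 2$, $G'\cong(C_{3})^{4}$ with $r\le 3$, $G'\cong C_{9}\times C_{3}$ with $r\le 1$, and $G'\cong(C_{3})^{5}$ with $r\le 1$ (here $r\le\operatorname{rank}(G')-1$, by a Frattini argument since $\gamma_{3}\subsetneq G'$). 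For each pair $(G',r)$ I would finish as in Lemma~\ref{Lemma 2.4}: either evaluate the Lie dimension subgroups $D_{(m),K}(G)$ via \cite[pp.\ 46,48]{Pa} together with the formula for $t^{L}(KG)$ from \cite{Sh1} and find $t^{L}(KG)\le 12$ (hence $t_{L}(KG)<13$); or, when Theorem~\ref{thm_6} only yields $t_{L}(KG)\ge 12$ or $13$, exhibit a nonzero element which is a product of powers $(z-1)^{e}$ over suitably chosen independent commutators $z\in G'$ (with $e$ one less than the order of $z$ modulo the relevant subgroup, and with some $z$ taken inside $\gamma_{3}(G)$ --- of the form $(g,h,h)$ where possible, so as to gain Lie depth through Lemmas~\ref{L4_6} and \ref{L2_6}), and which therefore lies in $KG^{[13]}$, again contradicting $KG^{[13]}=0$. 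In every case $t_{L}(KG)=13$ is impossible, so $p=2$.

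The step I expect to be the main obstacle is the handful of ``tight'' configurations, in particular $G'\cong C_{9}\times C_{3}$ with $r=1$, $G'\cong(C_{3})^{4}$ with $r=2$, and the maximal-class case $\gamma_{2}/\gamma_{3}\cong\gamma_{3}/\gamma_{4}\cong\gamma_{4}\cong C_{3}$: here Theorem~\ref{thm_6} already forces $t_{L}(KG)\ge 13$ while $t^{L}(KG)$ can equal $14$, so neither the crude bound $t^{L}(KG)\le 12$ nor Theorem~\ref{thm_6} by itself suffices, and one must construct the precise element of $KG^{[13]}$ witnessing $t_{L}(KG)\ge 14$. This demands a careful choice of the defining commutators together with a nonvanishing check, most naturally carried out in the associated graded algebra of $KG$ for the Lie dimension (or augmentation) filtration, and may have to be supplemented by the explicit computations of $t_{L}(KG)$ for class-$\le 3$ groups available in \cite{BKu}.
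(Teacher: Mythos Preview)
Your plan is correct and matches the paper's proof almost exactly: the same opening parity argument for $p\ge 5$, the same use of Lemma~\ref{L5_6} to bound the $m_i$, the reduction to $G'$ abelian, the Theorem~\ref{thm_6} inequality $t+r\le 11$, and then a case split on $t\in\{6,8,10\}$ finished either by computing $t^{L}(KG)\le 12$ or by exhibiting a nonzero product of shifted commutators lying in $KG^{[13]}$. The one organisational difference is that you dispose of the class-$4$ configuration up front using Lemmas~\ref{L4_6} and~\ref{L44_6}, whereas the paper first shows $G'$ is abelian (and $|G'|=3^{3}$ when $m_4=1$) and only kills the class-$4$ subcase inside Case~3; the net effect is the same.

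Two small points to tighten. First, your Frattini bound $r\le\operatorname{rank}(G')-1$ is not quite enough for $G'\cong(C_3)^4$: it leaves $r=3$ open, where Theorem~\ref{thm_6} gives only $t_L\ge 13$ and $t^{L}=16$. The paper closes this with Lemma~\ref{L5_6} again ($m_2+\tfrac32 m_3\le 5$ forces $m_3=r\le 2$), which you should invoke explicitly. Also, $(C_3)^5$ with $r=1$ is another tight case you did not list (Theorem~\ref{thm_6} gives $t_L\ge 13$, $t^{L}=14$); the paper dispatches it by citing \cite[Lemma~2.7]{Sa}, though your element-construction template works here too. Second, the existence of a single pair $x,y$ with $(x,y),(x,y,y),(x,y,y,y)$ independent in the successive quotients is exactly the Blackburn-type input from \cite{Bla58,Bo} that the paper invokes; it is not automatic from $m_2=m_3=m_4=1$ alone, so your appeal to \cite{Bla58} is essential, not cosmetic.
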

\begin{proof} As in the previous Lemma, $p< 5$. Let $p=3$ and $t_{L}(KG) =13$.	Then $G$ is nilpotent of class at most 4 by \cite{SS} and $|G'|\geq 3^{3}$. Thus $\gamma_{5}(G)=1$ and $\gamma_{3}(G)\subseteq\zeta(G')$. The exponent of $\gamma_{3}(G)$ is at most 3 because if $a_{1}, a_{2}, a_{3}\in G$, then $((a_{1}, a_{2}, a_{3})-1)^{8}\in (KG^{[3]})^{8}\subseteq KG^{[14]}=0$. Also by Lemma~\ref{L5_6}, $m_{2}+\frac{3}{2}m_{3}+2m_{4}\leq 5$. Clearly $m_{4}\leq 1$ and $m_{3}\leq 2$. Let $m_{4}=1$. Then $m_{2}=m_{3}=1$, $\gamma_{3}(G)\cong C_{3}\times C_{3}$ and $\gamma_{4}(G)\cong C_{3}$. Let $a,b$ and $c$ be the independent generators of $G'/\gamma_{3}(G)$, $\gamma_{3}(G)/\gamma_{4}(G)$ and   $\gamma_{4}(G)$ respectively.
If $|G'|\geq 3^{4}$, then $o(a)\geq9$ and  $(a-1)^{8}(b-1)^{2}(c-1)^{2}\in KG^{[17]}=0$, a contradiction. So $|G'|=3^{3}$ and  $G'$ is abelian. If $m_{4}=0$, then obviously $G'$ is abelian. Let  $(3^{m_{1}}, 3^{m_{2}}, \cdots, 3^{m_{s}})$ be the invariants of $G'$. Then by Theorem~\ref{thm_6}, $r+t\leq 11 $, where $ t= \sum\limits_{i=1}^{s}(3^{m_{i}}-1)$ is an even number and $\big\vert\gamma_{3}(G)G'^{3}/G'^{3}\big\vert = 3^{r}$. It is clear that  $t \geq 6$. We have the following cases: 

{\bf Case 1}: Let $t=10$. Then $G'\cong C_{9}\times C_{3}$ or $(C_{3})^{5}$ and $r\leq 1$. If $r=0$, then $\gamma_{3}(G)\subseteq G'^{3}$ and by \cite[Theorem 3.2]{BKu}, $t_{L}(KG)=12$, a contradiction.
If $r=1$ and $G'\cong (C_{3})^{5}$, then $\gamma_{3}(G)\cong C_{3}$. But then by \cite[Lemma~2.7]{Sa}, $t_{L}(KG) =14$. So $r=1$,  $G'\cong C_{9}\times C_{3}$ and $|\gamma_{3}(G)G'^{3}|=9$. Thus either $\gamma_{3}(G)\cap G'^{3}=1$, $\gamma_{3}(G)\cong C_{3}$ or $G'^{3}\subseteq\gamma_{3}(G)\cong C_{3}\times C_{3}$. If $\gamma_{3}(G)\cong C_{3}$ and $\gamma_{3}(G)\cap G'^{3}=1$, then let $G'/\gamma_{3}(G)=\langle b\gamma_{3}(G)\rangle$ and $\gamma_{3}(G)=\langle c\rangle$. Now $0\neq(b-1)^{8}(c-1)^{2}\in KG^{[13]}$, a contradiction. If $G'^{3}\subseteq\gamma_{3}(G)\cong C_{3}\times C_{3}$, then let $G'/\gamma_{3}(G)=\langle b\gamma_{3}(G)\rangle$, $b^{3}\in \gamma_{3}(G)$ and $\gamma_{3}(G)=\langle b^{3}\rangle\times \langle c\rangle$.  Again $0\neq(b-1)^{2}(b^{3}-1)^{2}(c-1)^{2}=(b-1)^{8}(c-1)^{2}\in  KG^{[13]}$, a contradiction.
 
{\bf Case 2}: Let $t=8$. Then $G'\cong C_{9}$ or $(C_{3})^{4}$ and $r\leq 3$.
Clearly $G'\cong C_{9}$ is not  possible. So $G'\cong (C_{3})^{4}$ and $r=m_{3}\leq2$. If $r\leq 1$, then $t_{L}(KG)\leq t^{L}(KG)\leq12$. So $r=2$,  $\gamma_{3}(G)\cong C_{3}\times C_{3}$ and $\gamma_{4}(G)=1$. Then by \cite{Bo}, there exist $x,y,z\in G$ such that $b_{1}=(x,y), ~b_{2}=(x,z), c_{1}=(x,y,y), ~c_{2}=(x,y,z)$, $G'=\langle b_{1}, b_{2},c_{1},c_{2}\rangle$ and  $\gamma_{3}(G)= \langle c_{1},c_{2}\rangle$. Therefore by  Lemmas~\ref{L2_6} and \ref{L4_6}, $(b_{1}-1)^{2}(b_{2}-1)^{2}(c_{1}-1)^{2}(c_{2}-1)^{2}\in KG^{[13]}=0$,
a contradiction. 

{\bf{Case 3.}} If $t=6$, then $G'\cong (C_{3})^{3}$ and $r\leq2$. For $r\leq 1$,  $t_{L}(KG)\leq t^{L}(KG)\leq 10$.  So $r=2$ and  $D_{(3),K}(G)=\gamma_{3}(G)\cong C_{3}\times C_{3}$. If $\gamma_{4}(G)=1$, then $t_{L}(KG)\leq t^{L}(KG)=12$. If $\gamma_{4}(G)\cong C_{3}$, then as in \cite{Bo} by using  \cite{Bla58}, there exist $x,y\in G$ such that $a= (x,y),~ b=(x,y,y),~ c= (x,y,y,y)$, $G'= \langle a,b,c\rangle$, $\gamma_{3}(G)= \langle b,c\rangle$ and $\gamma_{4}(G)= \langle c\rangle$.
Therefore by  Lemmas~\ref{L4_6} and \ref{L44_6}, $(a-1)^{2}(b-1)^{2}(c-1)^{2}\in KG^{[13]}=0$, a contradiction.
\end{proof}

\begin{theorem}\label{thm1_6} 
Let $G$ be a group and let $K$ be a field of characteristic $p>0$ such that $KG$ Lie nilpotent. Then $t_{L}(KG)=5p-3$ if and only if $t^{L}(KG)=5p-3$.
\end{theorem}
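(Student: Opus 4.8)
The plan is to dispose of the primes $p \neq 3$ quickly and then to treat $p = 3$ by the commutator techniques of Lemmas~\ref{Lemma 2.4} and~\ref{Lemma 2.5}.

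If $p > 3$, then $t_{L}(KG) = t^{L}(KG)$ by \cite[Theorem~1]{BP}, so the two conditions are identical. If $p = 2$, then $5p - 3 = 7 \leq 8$, and both implications can be read off from the classification of Lie nilpotent modular group algebras of Lie nilpotency index at most $8$ in \cite{CS,msbs5}. So I would assume $p = 3$, where $5p - 3 = 12$, and prove $t_{L}(KG) = 12$ if and only if $t^{L}(KG) = 12$.

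I would first treat the implication $t^{L}(KG) = 12 \Rightarrow t_{L}(KG) = 12$. Since $t_{L}(KG) \leq t^{L}(KG)$ always holds, it is enough to show $t_{L}(KG) \geq 12$. From $t^{L}(KG) = 2 + (p-1)\sum_{m\geq1} md_{(m+1)}$ together with the explicit Lie dimension subgroups $D_{(m),K}(G)$, the hypothesis forces $\sum_{m\geq1} md_{(m+1)} = 5$, whence $|G'| = 3^{n}$ with $n \leq 5$, and $\gamma_{3}(G)(G')^{3}/(G')^{3}$, $\gamma_{4}(G)(G')^{3}/(G')^{3}$ and the invariants of $G'$ are restricted to a short list. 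After the usual structural reductions (bounding the class of $G$ and showing that $G'$ is abelian), Theorem~\ref{thm_6}, which for $p = 3$ gives $t_{L}(KG) \geq t(G') + r + 1$ with $r = \log_{3}|\gamma_{3}(G)(G')^{3}/(G')^{3}|$, yields $t_{L}(KG) \geq 11$ in every surviving configuration; Lemma~\ref{Lemma 2.4} excludes $t_{L}(KG) = 11$, so $t_{L}(KG) = 12$.

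For the converse $t_{L}(KG) = 12 \Rightarrow t^{L}(KG) = 12$, which I expect to be the main obstacle, I would bound $t^{L}(KG)$ from above, since $t_{L}(KG) \leq t^{L}(KG)$ already gives $t^{L}(KG) \geq 12$. The idea is that Lemma~\ref{L2_6} (a nonzero product $((a_{1},a_{2})-1)^{2}((a_{1},a_{2},a_{3})-1)^{2}\cdots$ cannot lie in $KG^{[12]} = 0$) bounds the nilpotency class of $G$; Lemma~\ref{L5_6} bounds the ranks $m_{i}$ of $\gamma_{i}(G)/\gamma_{i+1}(G)$; further commutator identities force $G'$ to be abelian; and then Theorem~\ref{thm_6} combined with $t_{L}(KG) = 12$ confines the invariants $(3^{m_{1}},\cdots,3^{m_{s}})$ of $G'$ and $r = \log_{3}|\gamma_{3}(G)(G')^{3}/(G')^{3}|$ to finitely many configurations. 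For each configuration I would either produce a nonzero element $\prod_{i}(b_{i}-1)^{2} \in KG^{[12]}$, using Lemmas~\ref{L2_6}, \ref{L4_6} and~\ref{L44_6} together with the Blackburn-type presentation of the relevant quotient of $G$ (as in \cite{Bo}, via \cite{Bla58}), thereby contradicting $t_{L}(KG) = 12$; or, in the cases that genuinely occur, compute $t^{L}(KG)$ from the dimension-subgroup formula and check that it equals $12$. The delicate part is precisely this case analysis and, in each surviving case, the construction of exactly the right nonzero element of $KG^{[12]}$ or the evaluation of $t^{L}(KG)$; here Lemma~\ref{L44_6}, which controls the fourth commutator $(x,y,y,y)$ when $2$ is a unit, is the new ingredient that makes the class-$4$ configurations tractable, exactly as in the proof of Lemma~\ref{Lemma 2.5}.
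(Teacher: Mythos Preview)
Your plan for the forward implication $t_{L}(KG)=12\Rightarrow t^{L}(KG)=12$ is exactly the paper's: reduce to $G'$ abelian, invoke Theorem~\ref{thm_6} to bound $r+t\le 10$, and run the case analysis $t\in\{6,8,10\}$ with Lemmas~\ref{L2_6}, \ref{L4_6} (and, via Lemma~\ref{Lemma 2.5}, Lemma~\ref{L44_6}) and the Blackburn presentations from \cite{Bo,Bla58}.

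The gap is in your treatment of the converse $t^{L}(KG)=12\Rightarrow t_{L}(KG)=12$. You assert that Theorem~\ref{thm_6} ``yields $t_{L}(KG)\ge 11$ in every surviving configuration'', after which Lemma~\ref{Lemma 2.4} finishes. This is not true. Take the configuration $d_{(2)}=1$, $d_{(3)}=2$, i.e.\ $G'\cong(C_{3})^{3}$, $\gamma_{3}(G)\cong(C_{3})^{2}$, $\gamma_{4}(G)=1$, which genuinely occurs with $t^{L}(KG)=12$ (it is exactly Case~3 of the forward direction). Here $t(G')=7$ and $r=2$, so Theorem~\ref{thm_6} gives only $t_{L}(KG)\ge 10$, not $11$; Lemma~\ref{Lemma 2.4} alone cannot close the gap.

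The paper handles this direction differently and more cheaply: from $t_{L}(KG)\le t^{L}(KG)=12$ it simply rules out each smaller value of $t_{L}(KG)$ using prior equivalences. Namely $t_{L}(KG)\le 8$ forces $t^{L}(KG)\le 8$ by \cite{CS,msbs5}; $t_{L}(KG)=10=4p-2$ forces $t^{L}(KG)=10$ by \cite{Sa}; $t_{L}(KG)\neq 9$ by \cite{Sa}; and $t_{L}(KG)\neq 11$ by Lemma~\ref{Lemma 2.4}. Your argument can be repaired by inserting the appeal to \cite{Sa} to exclude $t_{L}(KG)\in\{9,10\}$, but as written the step ``Theorem~\ref{thm_6} gives $t_{L}(KG)\ge 11$'' is false.
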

\begin{proof}
By \cite{BP}, we have to discuss only $p=2$ and $3$. For  $p=2$, $t_{L}(KG)=5p-3$ if and only if $t^{L}(KG)=5p-3$, by \cite{msbs5}. Let  $p=3$ and $t_{L}(KG)=12$. As in the previous Lemma, $G'$ is abelian and $|G'|\geq 3^{3}$. Also if $|G'|\geq 3^{4}$, then $\gamma_{4}(G) = 1$. Let  $(3^{m_{1}}, 3^{m_{2}}, \cdots, 3^{m_{s}})$ be the invariants of $G'$ and $\big\vert\gamma_{3}(G)G'^{3}/G'^{3}\big\vert = 3^{r}$. Then by Theorem~\ref{thm_6}, $ r+t\leq 10$,
 where $ t= \sum\limits_{i=1}^{s}(3^{m_{i}}-1)$ is an even number. Clearly, $t\geq 6$. We have the following cases:

 {\bf{Case 1.}}	If $t=10$, then $r=0$. Thus $G'\cong C_{9}\times C_{3}$ or $(C_{3})^{5}$ and $\gamma_{3}(G)\subseteq G'^{3}$. Hence  $t_{L}(KG)= t^{L}(KG)$ by \cite[Theorem 3.2]{BKu}. 

{\bf{Case 2.}} If $t=8$, then $G'\cong C_{9}$ or $(C_{3})^{4}$ and $r\leq2$. If $G'\cong C_{9}$, then $t_{L}(KG)=10$. So $G'\cong (C_{3})^{4}$. If $r=0$, then by \cite[Theorem 3.2]{BKu}, $t_{L}(KG)=10$.  If $r=1$, then  $D_{(3),K}(G)=\gamma_{3}(G)\cong C_{3}$. Hence $d_{(2)}=3$, $d_{(3)}=1$ and $t^{L}(KG)=12$. If $r=2$, then $\gamma_{3}(G)\cong C_{3}\times C_{3}$. By the same arguments as in the Case 2 of the previous Lemma, we get a contradiction.

{\bf{Case 3.}} If $t=6$, then $G'\cong (C_{3})^3$. Clearly $r\leq 2$. If  $r\leq1$, then $t_{L}(KG)\leq t^{L}(KG)\leq10$. So $r=2$ and   $D_{(3),K}(G)=\gamma_{3}(G)\cong C_{3}\times C_{3}$.   If $\gamma_{4}(G)=1$, then  $d_{(2)}=1$, $d_{(3)}=2$ and  $t^{L}(KG)=12$. The possibility that  $\gamma_{4}(G)\cong C_{3}$ can be dismissed using the same arguments as in the Case 3 of the previous Lemma.

Conversely, if $p=3$ and $t^{L}(KG)= 12$, then $t_{L}(KG)\leq 12$. Now $t_{L}(KG)\neq9$ and $11$ by  \cite{Sa} and Lemma~\ref{Lemma 2.4}. If $t_{L}(KG)=10=4p-2$, then $t^{L}(KG)=10$ by \cite{Sa}. Also $t_{L}(KG)\leq 8$ yields $t^{L}(KG)=t_{L}(KG)\leq 8$, by \cite{CS,msbs5}. So $t_{L}(KG)=12$.
\end{proof}
\begin{theorem} \label{thm2_6}
Let $KG$ be a Lie nilpotent group algebra of a group $G$ over a field $K$ of characteristic $p>0$. Then $t_{L}(KG)=6p-4$ if and only if $t^{L}(KG)=6p-4$. 
\end{theorem}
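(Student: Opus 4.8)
The plan is to imitate the proof of Theorem~\ref{thm1_6}. By \cite[Theorem~1]{BP}, $t_{L}(KG)=t^{L}(KG)$ whenever $p>3$, so it suffices to treat $p=2$ and $p=3$. For $p=2$ we have $6p-4=8$, and the equivalence $t_{L}(KG)=8\iff t^{L}(KG)=8$ is part of the classification of Lie nilpotent modular group algebras with $t_{L}(KG)\le 8$ in \cite{CS,msbs5}. So the substance is the case $p=3$, $6p-4=14$.

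For the implication $t_{L}(KG)=14\Rightarrow t^{L}(KG)=14$ I would first extract the structural constraints on $G$ exactly as in Lemmas~\ref{Lemma 2.4} and~\ref{Lemma 2.5}: $G$ is nilpotent with $G'$ a finite $3$-group, $|G'|\ge 3^{3}$ because $t_{L}(KG)\le|G'|+1$ by \cite{SS}, $G$ has class at most $4$ by \cite{SS} with Lemma~\ref{L5_6} further bounding the ranks $m_{i}$ of $\gamma_{i}(G)/\gamma_{i+1}(G)$, $G'$ is abelian, and $\gamma_{3}(G)$ has exponent at most $3$ (since $((a_{1},a_{2},a_{3})-1)^{8}\in(KG^{[3]})^{8}\subseteq KG^{[14]}=0$ by Lemma~\ref{L2_6}). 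Writing $r$ for $\log_{3}\big\vert\gamma_{3}(G)G'^{3}/G'^{3}\big\vert$ and $t=\sum_{i}(3^{m_{i}}-1)$ for the invariants $(3^{m_{i}})$ of $G'$, the case $p=3$ of Theorem~\ref{thm_6} gives $t+r\le 12$ with $t$ even and $t\ge 6$; hence $G'$ is one of $(C_{3})^{3}$, $(C_{3})^{4}$, $(C_{3})^{5}$, $(C_{3})^{6}$, $C_{9}\times C_{3}$, $C_{9}\times(C_{3})^{2}$ (the remaining possibility $G'\cong C_{9}$ forces $t_{L}(KG)=10$). For each of these and each admissible $r$ I would argue as follows. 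If $r=0$ then $\gamma_{3}(G)\subseteq G'^{3}$, so $t_{L}(KG)=t^{L}(KG)$ by \cite[Theorem~3.2]{BKu} and hence $t^{L}(KG)=14$. If $r\ge 1$ I would compute the Lie dimension subgroups $D_{(m),K}(G)$ from the explicit formula of \cite{Pa}, read off $p^{d_{(m)}}=|D_{(m),K}(G):D_{(m+1),K}(G)|$, and evaluate $t^{L}(KG)=2+2\sum_{m\ge 1}md_{(m+1)}$ via \cite{Sh1}; in the configurations compatible with $t_{L}(KG)=14$ this equals $14$. Configurations for which this formula would give $t^{L}(KG)\ne 14$ must be eliminated by showing $t_{L}(KG)\ne 14$ in them: here one exhibits a concrete nonzero product $\prod(g_{i}-1)^{e_{i}}$ (with $g_{i}$ suitable generators of the terms of the lower central series, chosen, when $\gamma_{3}(G)$ or $\gamma_{4}(G)$ is small, from the presentations supplied by \cite{Bo,Bla58}) which by Lemmas~\ref{L2_6}, \ref{L4_6} and~\ref{L44_6} lies in $KG^{[14]}=0$ — a contradiction; the residual small indices $t_{L}(KG)\le 12$ that occur are excluded with \cite{Sa,CS,msbs5} and Theorem~\ref{thm1_6}.

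For the converse, $t^{L}(KG)=14$ gives $t_{L}(KG)\le 14$. We have $t_{L}(KG)\notin\{9,11,13\}$ by \cite{Sa}, Lemma~\ref{Lemma 2.4} and Lemma~\ref{Lemma 2.5} (applied with $p=3$); if $t_{L}(KG)\le 8$ then $t_{L}(KG)=t^{L}(KG)\le 8$ by \cite{CS,msbs5}; if $t_{L}(KG)=10=4p-2$ then $t^{L}(KG)=10$ by \cite{Sa}; and if $t_{L}(KG)=12=5p-3$ then $t^{L}(KG)=12$ by Theorem~\ref{thm1_6}. Each of these contradicts $t^{L}(KG)=14$, so $t_{L}(KG)=14$.

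The step I expect to be the real obstacle is the forward direction for $r\ge 1$: for each awkward configuration — the class-$4$ groups with $m_{4}=1$, and groups such as $G'\cong(C_{3})^{5}$ with $\gamma_{3}(G)\cong(C_{3})^{2}$ — one must decide whether it is genuinely realized with $t^{L}(KG)=14$ or is incompatible with $t_{L}(KG)=14$, and in the latter case produce the correct witness product together with the appropriate commutator identity from Lemmas~\ref{L4_6} and~\ref{L44_6}. Organising these subcases uniformly, rather than checking each group by hand, is the delicate part.
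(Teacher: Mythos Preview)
Your plan is correct and tracks the paper's proof closely: reduce to $p=3$ via \cite{BP} and \cite{CS,msbs5}, bound the class and the ranks $m_{i}$, show $G'$ is abelian, then split on $(t,r)$ and in each subcase either compute $t^{L}(KG)=14$ from the $d_{(m)}$'s or kill the configuration with an explicit nonzero product lying in $KG^{[14]}$; the converse is exactly the paper's. One small correction: for $t_{L}(KG)=14$ the result from \cite{SS} gives class $\le 5$ (not $4$), and the paper first uses Lemma~\ref{L5_6} to force $m_{5}=0$, then disposes of the $m_{4}=1$, $m_{2}=2$, $m_{3}=1$ subcase by an explicit product before entering the $(t,r)$ case split---this is precisely the preliminary step that yields $G'$ abelian, which you state as a conclusion but should present as an argument.
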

\begin{proof}
In view of \cite{BP, msbs5}, we have to consider the case $p=3$ only.
Let  $p=3$ and $t_{L}(KG)=14$. Then $G$ is nilpotent of class at most 5 by \cite{SS}. Thus $\gamma_{6}(G) =1$ and  $\gamma_{3}(G)$ is an abelian group. Also $|G'|\geq 3^{3}$ and the   exponent of $\gamma_{3}(G)$ is at most 3,  as for  $a_{1}, a_{2}, a_{3}\in G$,  $((a_{1}, a_{2}, a_{3})-1)^{8}\in (KG^{[3]})^{8}\subseteq KG^{[14]}$.  By Lemma~\ref{L5_6},
\begin{align*}
 m_{2}+\frac{3}{2}m_{3}+2m_{4}+3m_{5}\leq \frac{11}{2}.
\end{align*}
So $m_{5}\leq1$. If $m_{5}=1$, then we can not have $m_{2},m_{3},m_{4}\neq 0$. Thus $m_{5}=0$, $\gamma_{5}(G)=1$ and $\gamma_{3}(G)\subseteq\zeta(G')$. Clearly $m_{4}\leq1$. If $m_{4}=1$, then  either $m_{2}=2$, $m_{3}=1$ or $m_{2}=m_{3}=1$. Let $m_{2}=2$, $m_{3}=1$. Then by \cite{Bo}, there exist $x,y,z\in G$  such that $b_{1}=(x,y), ~b_{2}=(x,z), ~c_{1}=(x,y,y)$,  $G'= \langle b_{1},b_{2}, c_{1}, d_{1}\rangle$, $\gamma_{3}(G)= \langle c_{1}\rangle$ and $\gamma_{4}(G)=\langle d_{1}\rangle$
Therefore by  Lemma~\ref{L4_6}, $(b_{1}-1)^{2}(b_{2}-1)^{2}(c_{1}-1)^{2}(d_{1}-1)^{2}\in KG^{[14]}=0$,
a contradiction. 
Suppose $m_{2}=m_{3}=1$, then as in Lemma~\ref{Lemma 2.5}, $G'$ is abelian and $|G'|= 3^{3}$. If $m_{4}=0$, then $\gamma_{4}{(G)}=1$ and $G'$ is abelian.  Let  $(3^{m_{1}}, 3^{m_{2}}, \cdots, 3^{m_{s}})$ be the invariants of $G'$. Then by Theorem~\ref{thm_6}, $r+t\leq 12$,
where $ t= \sum\limits_{i=1}^{s}(3^{m_{i}}-1)$ is an even number and $\big\vert\gamma_{3}(G)G'^{3}/G'^{3}\big\vert = 3^{r}$. 
Clearly $t \geq 6$. We have the following cases:

 {\bf{Case 1.}}	If $t=12$, then $r=0$. Thus $G'\cong C_{9}\times C_{3}\times C_{3}$ or $(C_{3})^{6}$ and $\gamma_{3}(G)\subseteq G'^{3}$. Hence  $t_{L}(KG)= t^{L}(KG) =14$ by \cite[Theorem 3.2]{BKu}. 

{\bf Case 2}: Let $t=10$. Then $G'\cong C_{9}\times C_{3}$ or $(C_{3})^{5}$ and $r\leq 2$. If $r=0$, then $\gamma_{3}(G)\subseteq G'^{3}$ and by \cite[Theorem 3.2]{BKu}, $t_{L}(KG)= 12$. If $r=1$ and $G'\cong (C_{3})^{5}$, then $\gamma_{3}(G)\cong C_{3}$. Hence $d_{(2)}=4$, $d_{(3)}=1$ and $t^{L}(KG) =14$.  If $r=2$ and $G'\cong (C_{3})^{5}$, then $m_{2}=3$ and $m_{3}=2$, a contradiction by Lemma~\ref{L5_6}. Let $r=1$ and $G'\cong C_{9}\times C_{3}$. Then  $|D_{(3),K}(G)|=9$. Thus $|\gamma_{3}(G)|\leq9$. Let $\gamma_{4}(G)\cong C_{3}$ and $\gamma_{4}(G)\cap G'^{3}=1$. Then  $|D_{(4),K}(G)|=3^{2}$, $d_{(2)}=1$, $d_{(3)}=0$, $d_{(4)}=2$ which is not possible by \cite{Sh2}. So $ \gamma_{4}(G)\subseteq G'^{3}$, $|D_{(4),K}(G)|=3$, $d_{(2)}=d_{(3)}=d_{(4)}=1$ and $t^{L}(KG) =14$.
If $G'\cong C_{9}\times C_{3}$ and $r=2$, then  $|D_{(3),K}(G)|=3^{3}$ and $d_{(2)}=0$ which is not possible.
 
{\bf Case 3}: Let $t=8$. Then $G'\cong (C_{3})^{4}$ and $r\leq 3$.
 If $r=3$, then $\gamma_{3}(G)\cong (C_{3})^{3}$. Let $\gamma_{3}(G)=\langle c_{1},c_{2},c_{3} \rangle$ and let $b_{1}\in G'\backslash\gamma_{3}(G)$ such that $G'=\langle b_{1},c_{1},c_{2},c_{3} \rangle$. As in  \cite{Bo}, there exist $x,y\in G$ such that $b_{1}=(x,y)$ and  $c_{1}=(x,y,y)$. Then by Lemmas~\ref{L2_6} and \ref{L4_6}, $(b_{1}-1)^{2}(c_{1}-1)^{2}(c_{2}-1)^{2}(c_{3}-1)^{2}\in KG^{[14]}=0$,
which is a contradiction. If $r\leq 1$, then $t_{L}(KG)\leq t^{L}(KG)\leq12$. So $r=2$ and $\gamma_{3}(G)\cong C_{3}\times C_{3}$.  Hence $d_{(2)}= d_{(3)}=2$ and $t^{L}(KG) =14$.

{\bf{Case 4.}} If $t=6$, then $G'\cong C_{3}\times C_{3}\times C_{3}$ and $r\leq2$. If $r\leq 1$, then $t_{L}(KG)\leq t^{L}(KG)\leq10$.  So $r=2$ and  $D_{(3),K}(G)=\gamma_{3}(G)\cong C_{3}\times C_{3}$. If $\gamma_{4}(G)=1$, then $t_{L}(KG)\leq t^{L}(KG)=12$. So $\gamma_{4}(G)\cong C_{3}$,  $d_{(2)}=d_{(3)}=d_{(4)}=1$ and $t^{L}(KG) =14$.

Conversely, if $p=3$ and $t^{L}(KG)= 14$, then $t_{L}(KG)\leq 14$. We can not have $t_{L}(KG)=13$ by Lemma~\ref{Lemma 2.5}. Also $t_{L}(KG)\neq9, 10$ and $11$ by  \cite{Sa} and Lemma~\ref{Lemma 2.4}. For $t_{L}(KG)\leq 8$ yields $t^{L}(KG)=t_{L}(KG)\leq 8$, by \cite{CS,msbs5}. If $t_{L}(KG)=12$,  then $t^{L}(KG)=12$ by Theorem~\ref{thm1_6}. So $t_{L}(KG)=14$.
\end{proof}
\begin{remark}
Classification of Lie nilpotent group algebras $KG$   with the upper Lie nilpotency index $t^{L}(KG)=5p-3$ and $6p-4$ is given in \cite{msbs1}.
\end{remark}
{\bf{Acknowledgments}}: The financial assistance provided to the second author  from University Grants Commission, New Delhi, India is gratefully acknowledged.

\end{document}